\theoremstyle{plain}
\newtheorem{theorem}{Theorem}
\renewcommand{\vec}[1]{\mathbf{#1}}
\begin{document}
\author{Iwan Praton}
\address{Franklin \& Marshall College,
Lancaster, PA 17604}
\email{ipraton@fandm.edu}
\title{Tilings with the Minimal Tile Property}
\begin{abstract}
A square tiling of the unit square is said to have the
minimal tile property if the smallest tile can tile
all the other tiles. We show that in such a tiling,
the smallest tile cannot be too small.
\end{abstract}

\maketitle

Start with a unit square and fix a positive integer $n$.
Tile the unit square with $n$ smaller squares. (In other words,
put $n$ non-overlapping small squares---which
do not have to be the same size---inside the unit square,
with no space left over). This can be done for $n=4$
and $n\geq 6$. There are many questions
we can ask about this configuration, the most famous of which
is probably the ``squaring the square'' problem \cite{BSST}.
 Here we focus
on a different question. Following Erd\H{o}s and
Soifer, we ask how large
the sum of the sides of the small squares can be. There are
good conjectures for the answer \cite{ErdosSoifer}, 
but the problem appears
difficult. In order to make the problem tractable to 
computers, Alm \cite{Alm} introduced a new additional condition:
we consider only tilings that have the minimal tile
property (MTP): the smallest square tile is required to tile all
the other tiles. Thus the side lengths of all the tiles
have to be an integer multiple of the side length
of the smallest tile. 

It is not immediately obvious that MTP tilings exist
for all values of $n$ for which regular tilings exist.
But they do, as shown in the figure below.

\begin{figure}[h]
\begin{center}
\begin{tikzpicture}[scale=0.4]
\draw[<-] (0,8)--(2.5,8);\node at (3.5,8){$k$};
\draw[->] (4.5,8)--(7,8);
\draw[thick] (0,0) rectangle (7,7);
\draw[thick] (0,0) rectangle (6,6);
\draw[thick] (6,6) rectangle (7,7);
\draw[thick] (0,6) rectangle (1,7);
\draw[thick] (5,6) -- (5,7);
\node at (3.5,6.5) {$\cdots$};
\draw[thick] (6,0) rectangle (7,1);
\draw[thick] (6,5) -- (7,5);
\node at (6.5,3.5) {$\vdots$};
\end{tikzpicture}
\qquad
\begin{tikzpicture}[scale=0.4]
\draw[<-] (0,8)--(2.5,8);\node at (3.5,8){$k$};
\draw[->] (4.5,8)--(7,8);
\draw[thick] (0,0) rectangle (7,7);
\draw[thick] (0,0) rectangle (6,6);
\draw[thick] (6,6) rectangle (7,7);
\draw[thick] (0,6) rectangle (1,7);
\draw[thick] (5,6) -- (5,7);
\node at (3.5,6.5) {$\cdots$};
\draw[thick] (6,0) rectangle (7,1);
\draw[thick] (6,5) -- (7,5);
\node at (6.5,3.5) {$\vdots$};
\draw[thick] (6.5,6)--(6.5,7);
\draw[thick] (6,6.5) -- (7,6.5);
\end{tikzpicture}
\caption{MTP tilings for $n=2k$ and $n=2k+3$, where
$k\ge 2$.}
\end{center}
\end{figure}
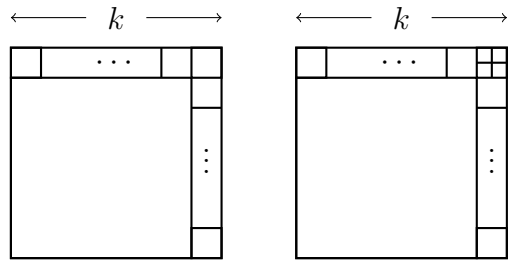

The main aim of MTP tilings is to use a computer to help check
all possibilities. We thus need to show that there are only
finitely many possibilities to check. One way to do this
is to show that for a given value of $n$, the smallest tile 
cannot be too small. Then there are only finitely many
possible values for the side lengths of all tiles, so we
only need to check a finite number of configurations.
The theorem below is a result of this nature.

\begin{theorem}
In an MTP tiling with $n$ squares, the side length of
the smallest tile is at least $1/2^n$.
\end{theorem}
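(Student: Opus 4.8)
The plan is to first replace the continuous tiling problem by a purely combinatorial statement about integer-sided tilings, and then to bound the resulting integer by induction on the number of tiles.

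First I would normalize. Let $s$ be the side of the smallest tile. By the MTP, every tile has side a positive integer multiple of $s$. Now look at the bottom edge of the unit square: the tiles resting on it have bottom sides that partition $[0,1]$, so their side lengths, each an integer multiple of $s$, sum to $1$. Hence $1/s$ is a positive integer; call it $N$. Rescaling the whole picture by the factor $N=1/s$ turns the unit square into an $N\times N$ square tiled by $n$ squares of positive integer side, the smallest having side $1$ (and, sweeping upward from the bottom-left corner, one checks that every tile corner in fact lands on the integer lattice). Since $s\ge 1/2^{n}$ is equivalent to $N\le 2^{n}$, the theorem becomes the clean assertion that an $N\times N$ square cut into $n$ integer squares of minimal side $1$ must have $N\le 2^{n}$.

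For the combinatorial statement I would argue by induction on $n$, proving the scale-invariant strengthening that any integer-sided \emph{rectangle} tiled by $n$ integer squares whose smallest side is $g$ has longer side $L\le 2^{n}g$; the base case $n=1$ is immediate, and the original square is the case $g=1$. For the inductive step the goal is a reduction that deletes at least one tile and produces a genuine integer squared rectangle whose longer side has dropped by at most a factor of $2$, since feeding the result into the inductive hypothesis then gives $L\le 2\cdot 2^{n-1}g$. The cleanest situation is when the tiling has a \emph{fault line}, a straight cut across the whole rectangle meeting no tile interior, which splits the rectangle into two squared rectangles with $n_1$ and $n_2$ tiles. I would also single out a corner tile—each of the four corners is the corner of a unique tile—and study the L-shaped region left after removing it, aiming to show that the corner tile along the long side is comparable to $L$, so that its removal controls the size.

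The main obstacle is that a typical squared square has \emph{no} fault line and its smallest tile is interior, surrounded by strictly larger tiles, so neither a global split nor a naive local excision is available; moreover, splitting along a fault and \emph{adding} the two side lengths is a sum-type recursion that degrades the constant, whereas the target $2^{n}$ requires a max-type step that loses only a single factor of $2$ per tile. To obtain a robust argument I would instead pass to the Brooks--Smith--Stone--Tutte electrical model \cite{BSST}: a squared rectangle corresponds to a planar network with one edge per tile, the side of each tile being the current in that edge and the full side $N$ being the total current. Kirchhoff's laws express every current as a ratio of sums of spanning-tree weights, so $N$ is at most the number of spanning trees of the $n$-edge network; and since a spanning tree is just a subset of the edges, that count is at most $2^{n}$, which is exactly where the constant in the theorem comes from. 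Turning this heuristic into a clean inequality—controlling the spanning-tree count of the associated planar graph, or equivalently making the factor-$2$-per-tile reduction rigorous—is the step I expect to be hardest.
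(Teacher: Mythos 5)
You have landed on the paper's actual method---normalize so the smallest tile has side $1$ (your observation that $N=1/s$ is an integer and the configuration is lattice-aligned matches the paper's setup exactly), pass to the Brooks--Smith--Stone--Tutte network with one edge per tile, and extract the constant $2^n$ from the trivial bound (number of spanning trees) $\leq$ (number of edge subsets) $=2^n$. But the step you yourself flag as heuristic, namely $N\leq$ (number of spanning trees), is precisely where all the content lies, and your proposal does not close it. Kirchhoff's tree formula by itself only says that with unit total current the current in edge $e$ is $N_e/d$ for some nonnegative integer $N_e$, where $d=\det L'$ is the tree number; scaling to total current $N$, the side of tile $e$ is $N\cdot N_e/d$. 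This alone cannot bound $N$ by $d$: the statement is scale-invariant in $N$, whereas the theorem is not, so the hypothesis ``smallest side equals $1$'' must enter as a coprimality input, not mere bookkeeping. Note also that your stated worry---``controlling the spanning-tree count of the associated planar graph''---is a red herring: no control beyond the crude $d\leq 2^n$ is used or needed, and there is no ``factor-$2$-per-tile reduction'' to make rigorous; the missing piece is algebraic, not inductive.

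Here is how the paper closes the gap, in two lines. Let $q_1,\dots,q_m$ be the integer heights of the horizontal boundary components; since some tile has side $1$ and every tile side is a difference $q_i-q_j$, any common divisor of all the $q_i$ divides $1$, so $\gcd(q_1,\dots,q_m)=1$. The unit-current potential vector is $\mathbf{p}=\mathbf{q}/N$, and by Cramer's rule $d\mathbf{p}$ is an integer vector; thus $(d/N)\mathbf{q}$ is an integer vector proportional to a coprime integer vector, which forces $d/N\in\mathbb{Z}$ (write $1=\sum_i c_i q_i$ with $c_i\in\mathbb{Z}$ and multiply by $d/N$). Hence $N$ divides $d$, so $N\leq d\leq 2^n$. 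Equivalently, in your language: the edge carrying the smallest tile has $N\cdot N_e/d=1$ with $N_e$ a positive integer, so $d=N\cdot N_e\geq N$. Your first half (fault lines, corner-tile excision) is a dead end for exactly the reasons you give---generic squared squares are fault-free and the recursion is sum-type---so you were right to abandon it; with the divisibility argument above supplied, your outline becomes the paper's proof.
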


The proof uses the electrical network theory of Brooks et al
\cite{BSST}. It turns out to be convenient to adjust our scale so that
the smallest tile has side length $1$. Then all tiles
in an MTP tiling has integer side lengths; the side
length of the big tiled square is also an integer. 

We now summarize the electrical network approach
to analyzing tilings. Details are in \cite{BSST} or \cite{Wagner}. 
Suppose
we have a rectangle of height $h$ and width $w$ tiled
by squares whose side lengths are integers. We place
the rectangle and its tiles so that all sides are
horizontal or vertical. First we convert the tiling into a graph
$G(V,E)$, where the vertex set $V$ consists of all 
connected components of the horizontal boundaries of
the tiling, and the edge set $E$ is the set of all
squares in the tiling. Since every tile has two horizontal
boundaries, it is natural to say that two vertices is 
connected by an edge if the corresponding tile has the
two corresponding horizontal components as boundaries.
(Note that it is possible to have multiples edges connecting
two vertices.) 

Let $m$ denote the number of vertices in the graph $G$.
We label the vertices of $G$ with integers $1$ to $m$.
The vertex corresponding
to the top edge of the rectangle is vertex 1; similarly,
the vertex corresponding to the bottom edge
is vertex $m$. 

In the simple example below, the rectangle has height 2
and width 4, tiled with 5 squares.
The corresponding graph is shown on the right. 

\begin{figure}[h]
\begin{center}
\begin{tikzpicture}
\draw (0,0) -- (4,0);
\draw (0,1) -- (1,1);
\draw (3,1) -- (4,1);
\draw (0,2) -- (4,2);
\draw (0,0) -- (0,2);
\draw (1,0) -- (1,2);
\draw (3,0) -- (3,2);
\draw (4,0) -- (4,2);
\draw (3.5,1)--(3.5,2);
\draw (3,1.5)--(4,1.5);
\filldraw (8,2) circle (3pt);
\filldraw (7,1) circle (3pt);
\filldraw (9,1) circle (3pt);
\filldraw (8,0) circle (3pt);
\filldraw (8.5,1.5) circle (3pt);
\draw (9,1) -- (8,0)--(7,1)--(8,2)--(8,0);
\draw (8,2) to [out=0,in=90] (8.5,1.5);
\draw (8.5,1.5) to [out=-90,in=180] (9,1);
\draw (8.5,1.5) to [out=0,in=90] (9,1);
\draw (8,2) to [out=-90,in=180] (8.5,1.5);
\node [right] at (7.5,2) {$1$};
\node [right] at (8.1,0) {$5$};
\end{tikzpicture}
\caption{A rectangle tiled by squares and
its corresponding network graph.}
\end{center}
\end{figure}
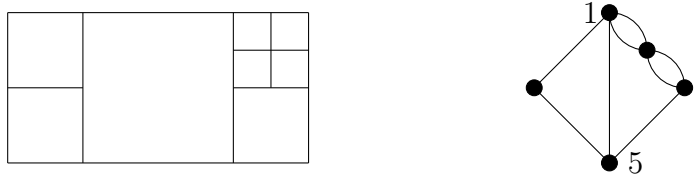

We now think of $G$ as an electric network. The edges
are wires through which electric currents can flow. We'll
assume that each edge has unit resistance. Let's say
that one unit of current flows into vertex 1, goes through
the network, and leaves from vertex $m$ (which is
grounded, i.e., vertex $m$ has zero electric potential).
 Then
it is possible, using Kirchoff's and Ohm's Laws, to calculate
the amount of current in each wire and the electric
potential in each vertex. The calculations can be 
conveniently summarized in matrix form. 

Define an $m\times m$ matrix $L=(a_{ij})$,
where $a_{ii}$ is the degree of vertex $i$, and for $i\neq j$,

\[a_{ij}=\begin{cases}
0 & \text{if vertices $i$ and $j$ are not connected;}\\
 -r & \text{if there are $r$ edges connecting $i$ and $j$.}
\end{cases}
\]

It turns out that $L$ is a singular matrix with rank $m-1$.
For our example, the matrix $L$ is
\[
L=\begin{pmatrix*}[r]
4 & -2 & 0 & -1 & -1\\
-2 & 4 & -2 & 0 & 0\\
0 & -2 & 3 & 0 & -1\\
-1 & 0 & 0 & 2 & -1\\
-1 & 0 & 2 & -1 & 3\\
\end{pmatrix*}.
\]

Now let $\vec{p}=(p_1,\ldots,p_m)^T$, where $p_i$
is the electric potential at vertex $i$. Note that $p_m=0$
since vertex $m$ is grounded. Ohm's Law 
indicates that the $i$th component of $L\vec{p}$
is the total amount of current flowing into (and out of)
vertex $i$; by Kirchoff's Law (which is a current
preservation law), this amount is 
$1$ when $i=1$;  $-1$ when $i=m$;
and $0$ when $i\neq 1,m$.
Therefore $L\vec{p}=\vec{e_1}-\vec{e}_m$ where
$e_i$ is the $i$th standard basis vector. Thus to
find $p_k$ we need to solve the matrix equation
$L\vec{v}=\vec{e_1}-\vec{e}_m$; we want the
unique solution with $v_m=0$.

We can get this solution by first solving the equation
$L'\vec{v}'=\vec{e}_1$, where $L'$ is the matrix
$L$ without its last row and last column. The solution
we want is $\vec{v}'$ with a $0$ appended as the
last entry. The matrix $L'$ is invertible, so there
is a unique solution.

In our example, $L'=\begin{psmallmatrix*}[r]
4 & -2 & 0 & -1 \\
-2 & 4 & -2 & 0\\
0 & -2 & 3 & 0 \\
-1 & 0 & 0 & 2\\
\end{psmallmatrix*}$,
whose determinant is $32$. The solution to
$L'\vec{v}'=\vec{e}_1$ is $\vec{v}'=\frac18(4,3,2,2)^T$, so
the vector of potentials is $\vec{p}=\frac18(4,3,2,2,0)^T$.

We can also find the potentials a different way.
Go back to the tiling of the rectangle. Place
the bottom edge of the rectangle at $y=0$ and let 
$q_i$ denote the $y$-coordinate of the horizontal
boundary that corresponds to the vertex $i$.
Define the vector 
$\vec{q}$ as $(q_1,\dots,q_m)^T$. 
We note that $q_m=0$, $q_1=h$, the height of the rectangle,
and each $q_i$ is an integer. 
In our example, $\vec{q}=(4,3,2,2,0)$. 

We define potentials and currents in $G$ using $\vec{q}$
as follows; we'll show that these potentials and currents
satisfy Ohm's and Kirchoff's Laws.
We define the electric potential at vertex $i$ to be $q_i$.
Previously we imposed 1 unit of current flowing through
$G$ from vertex $1$ to vertex $m$, but now 
instead of 1 unit we will use $w$ units, where $w$
is the width of the rectangle. The currents in the interior
of $G$ are defined as follow.

If there is an edge connecting vertex $i$ and vertex $j$,
we say there is a current  of $q_i-q_j$ units
flowing from $i$ to $j$. These
currents and potentials clearly satisfy Ohm's Law (since
each edge has unit resistance). Now suppose
vertex $j$ corresponds to an interior horizontal
boundary. A tile above and on this
horizontal boundary contributes $q_i-q_j$,
the side length of the tile, to the current coming into vertex $j$.
The total amount from such tiles is the sum of the
side lengths of these tiles, so it is equal to the length
of the boundary. Similarly, the total contribution
of the tiles on and below the horizontal boundary
the negative of the length of this boundary. 
Therefore the total current coming into and out of
vertex $j$ is zero, in accordance to Kirchoff's Law.
Of course, these laws are also satisfied at vertex
$1$ and $m$. Therefore the potentials $q_i$
and the currents $q_i-q_j$ are exactly the values
attached to the electrical network, except that
the external current from vertex $1$ to vertex $m$
is now $w$ units. 

We conclude that
\[
L\vec{q}=w\vec{e}_1-w\vec{e}_m,
\]
and so $L(\vec{q}/w)=\vec{e}_1-\vec{e}_m$. 
Since potentials are unique, we have $\vec{p}=\vec{q}/w$,
providing an explicit solution to the problem
of finding the electric potentials of the network. 

This is sufficient to provide a proof of Theorem 1.

\begin{proof}[Proof of Theorem 1]
As above, we scale our tiling so that the smallest tile
has side length $1$. It suffices to prove that the side
length of the big tiled square is at most $2^n$.
In the notation above, we want to show that $h=w\leq 2^n$.

First we note that in this case, there is a tile of side
length $1$, so $q_1,\ldots,q_m$ are integers with no
common factors (other than $1$). 
Let $d=\det L'$. Then $d\vec{p}$ is a vector of integers
that is a scalar multiple of $\vec{q}$. Since the
components of $\vec{q}$ are relatively prime, in fact $d\vec{p}$ is an
integer multiple of $\vec{q}$. In particular, $dp_1>0$ is
an integer multiple of $q_1>0$, so $dp_1\geq q_1$.
Note that $q_1=h$ and $p_1=q_1/w=h/w$, so $dh/w\geq h$ or
$w\geq d$.

By the Matrix-Tree theorem \cite{Wagner}, $d$ is the number of spanning
trees of the graph $G$. Since a spanning tree is a subset
of the edges of $G$, we have the crude estimate $d\leq 2^n$,
where $n$ is the number of edges of $G$, i.e., the number
of squares in the tiling. Thus we have $w\leq 2^n$, which
is what we want. 
\end{proof}

Note that the proof does not rely on the minimal tile
property: what we need is that the components of $\vec{q}$
are relatively prime. It is straightforward to verify that
this can be realized for any tiling of the unit square.
Thus a similar result holds for all tilings, not just MTP tilings.

Of course, the bound here is quite crude and not useful
for computational purposes unless $n$ is small. For
specific values of $n$ there are better bounds.

Here is an example. First we introduce some notation.
If $T$ is a tiling of the unit square with $n$ tiles, 
denote by 
$\sigma(T)$ the sum of the side lengths of the tiles
in $T$. We want to find an optimal tiling $T_M$ such that
$\sigma(T_M)\geq \sigma(T)$ for all MTP tilings
$T$. More generally, we would like to find $T_M$
such that $\sigma(T_M)\geq \sigma(T)$ for all tilings
$T$, not just MTP tilings. 

Suppose now $n=k^2+3$ where $k$ is
a positive integer. The standard $k\times k$ grid,
where one tile is further divided into 4 equal pieces,
is conjectured to be optimal over all tilings
\cite{ErdosSoifer}. It also happens to be an MTP 
tiling. The figure below shows an example
with $k=3$.

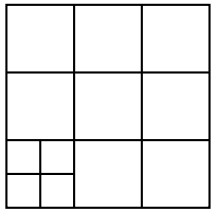
\begin{figure}[h]
\begin{center}
\begin{tikzpicture}[scale=0.3]
\draw[thick] (0,0) rectangle (9,9);
\draw[thick] (0,3) -- (9,3);
\draw[thick] (0,6) -- (9,6);
\draw[thick] (3,0) -- (3,9);
\draw[thick] (6,0) -- (6,9);
\draw[thick] (1.5,0)--(1.5,3);
\draw[thick] (0,1.5)--(3,1.5);
\end{tikzpicture}
\caption{MTP tiling for $n=k^2+3$ (here $k=3$).}
\end{center}
\end{figure}

The sum of the side lengths of this MTP tiling is
$k+1/k$. Thus $\sigma(T_M)\geq k+1/k$. 

\begin{theorem}
Suppose $T_M$ is an optimal tiling with
$n=k^2+3$ (where $k\geq 2$). Then
the smallest tile in $T_M$ has side length at least
$1/(3k^3)$.
\end{theorem}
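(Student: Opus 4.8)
The plan is to bypass the electrical-network machinery entirely and argue purely from the area constraint together with the lower bound $\sigma(T_M)\ge k+1/k$ established above. Write the side lengths of the tiles of $T_M$ in increasing order as $s_1\le s_2\le\cdots\le s_n$, so that $s:=s_1$ is the side length of the smallest tile. Since the tiles fill the unit square, their areas sum to $1$, giving $\sum_{i=1}^n s_i^2=1$ and hence $\sum_{i=2}^n s_i^2=1-s^2$. Singling out the smallest tile and applying the Cauchy--Schwarz inequality to the remaining $n-1=k^2+2$ tiles yields
\[
\sigma(T_M)=s_1+\sum_{i=2}^n s_i\le s_1+\sqrt{(n-1)\sum_{i=2}^n s_i^2}=s+\sqrt{(k^2+2)(1-s^2)}.
\]
Combining this with $\sigma(T_M)\ge k+1/k$ gives the single-variable inequality $g(s)\ge k+1/k$, where $g(s):=s+\sqrt{(k^2+2)(1-s^2)}$.

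Next I would analyze $g$ on the relevant range. Because the smallest side length is no larger than the root-mean-square of all side lengths, $s^2\le\frac1n\sum_i s_i^2=\frac1n$, so $s\le 1/\sqrt{n}=1/\sqrt{k^2+3}$. A short computation of $g'$ shows that its only critical point in $[0,1]$ is at $s=1/\sqrt{k^2+3}$, so $g$ is increasing on $\bigl[0,1/\sqrt{k^2+3}\bigr]$ (and indeed attains there its maximum $\sqrt{k^2+3}$, the value forced by all tiles being equal). Thus on the interval actually containing $s$, the inequality $g(s)\ge k+1/k$ is equivalent to a lower bound $s\ge s_0$, where $s_0$ is the unique solution of $g(s_0)=k+1/k$ in this interval.

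It therefore suffices to show $g(1/(3k^3))\le k+1/k$, since monotonicity then forces $s\ge 1/(3k^3)$; note $1/(3k^3)$ does lie in the increasing range because $9k^6\ge k^2+3$. Writing $k+1/k=\sqrt{k^2+2+1/k^2}$ and squaring the resulting inequality, everything becomes polynomial: after clearing denominators the claim reduces to $3k^4-5k^2+3\ge 0$, which holds for every $k$ since the discriminant $25-36$ of $3x^2-5x+3$ is negative. The main obstacle is thus not conceptual but one of calibration: the constant $3$ (rather than the $2$ suggested by the leading-order estimate $k+1/k-\sqrt{k^2+2}\approx 1/(2k^3)$) is chosen to leave exactly enough slack to absorb the lower-order terms and make the final polynomial inequality hold uniformly in $k$.
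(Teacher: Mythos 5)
Your proof is correct and is essentially the paper's argument: your Cauchy--Schwarz step is exactly the Staton--Tyler inequality the paper cites, applied with the same decomposition (smallest tile plus the remaining $k^2+2$ tiles of total area $1-s^2$) and compared against the same benchmark $\sigma(T_M)\ge k+1/k$. The only difference is in the finish, and it is cosmetic: where the paper simply drops the factor $1-x^2$ and verifies $\sqrt{k^2+2}<k+1/k-1/(3k^3)$ (equivalent to $3k^4-6k^2+1>0$, which is why it needs $k\ge 2$), you retain that factor and add a monotonicity analysis of $g$, arriving at the marginally stronger polynomial inequality $3k^4-5k^2+3\ge 0$ --- valid, and your reduction checks out, but an elaboration the paper's cruder estimate avoids.
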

\begin{proof}
We recall a result from \cite{StatonTyler}: if 
we have $m$ nonoverlapping tiles with total area $A$,
then the sum of their side lengths is at most
$\sqrt{mA}$. 

Now let $x$ denote the length of the smallest
tile in a tiling $T$. Then its area is $x^2$ and the other
$k^2+2$ tiles have area $1-x^2$. 
Thus
\[
\sigma(T)\leq x+\sqrt{(1-x^2)(k^2+2)}
< x+\sqrt{k^2+2}<x+k+1/k-1/(3k^3),
\]
so if $x<1/(3k^3)$, then $T$ is not optimal.
\end{proof}

\end{document}